\documentclass{article}

\usepackage[paper=a4paper,left=30mm,right=30mm,top=25mm,bottom=25mm]{geometry}
\usepackage{amsmath,amssymb,amsthm,mathtools}
\usepackage{graphicx}
\usepackage{comment}
\usepackage{microtype}
\usepackage[numbers]{natbib}
\usepackage[colorlinks=true,linkcolor=blue,citecolor=blue,urlcolor=blue]{hyperref}

\newtheorem{theorem}{Theorem}[section]
\newtheorem{lemma}[theorem]{Lemma}
\newtheorem{proposition}[theorem]{Proposition}
\newtheorem{corollary}[theorem]{Corollary}
\newtheorem{remark}[theorem]{Remark}

\newcommand{\C}{\mathcal{C}}
\newcommand{\I}{[0,1]}
\newcommand{\R}{\mathbb{R}}
\newcommand{\Q}{\mathcal{Q}}
\newcommand{\de}{\,\mathrm{d}}
\newcommand{\Om}{\Omega_{\tau,\phi,\beta}}
\DeclareMathOperator{\sgn}{sgn}
\newcommand{\keywords}[1]{\par\medskip\noindent\textbf{Keywords:} #1}

\hypersetup{
  pdftitle={The exact region determined by Kendall's tau, Spearman's footrule and Blomqvist's beta},
  pdfauthor={Jacob Israel Orenday Lares and Marcus Rockel},
  pdfsubject={Exact attainable regions of bivariate copula concordance coefficients},
  pdfkeywords={copula, Kendall's tau, Spearman's footrule, Blomqvist's beta, attainable region}
}

\title{
	The exact region determined by Kendall's tau, Spearman's footrule and Blomqvist's beta
}
\author{%
  Jacob Israel Orenday Lares\textsuperscript{*}
  \and
  Marcus Rockel\textsuperscript{$\dagger$}
}
\date{July 2026}

\begin{document}

\maketitle
\begin{center}
  \small\textit{\textsuperscript{*}Corresponding author. \texttt{jacob.orenday@gmail.com}}\\[2mm]
  \small\textit{%
    \textsuperscript{$\dagger$}Department of Quantitative Finance,\\
    Institute for Economics, University of Freiburg,\\
    Rempartstr. 16, 79098 Freiburg, Germany,\\
    \texttt{marcus.rockel@finance.uni-freiburg.de}
  }
\end{center}

\begin{abstract}
We determine the exact region $\Omega_{\tau,\phi,\beta}\coloneqq \{(\tau(C),\phi(C),\beta(C)):C\in\mathcal{C}\}$ of possible joint values of Kendall's tau, Spearman's footrule and Blomqvist's beta over the class $\mathcal{C}$ of all bivariate copulas.
The region consists precisely of all triples $(t,p,b)$ satisfying $-1\le b\le 1$, $\frac{3}{16}(1+b)^2-\frac12\le p\le 1-\frac38(1-b)^2$ and $\frac43 p-\frac13\le t\le \frac23 p+\frac13$.
In other words, the known exact $(\phi,\beta)$- and $(\tau,\phi)$-regions already characterize the joint region, so that, once the value of Spearman's footrule is fixed, Blomqvist's beta imposes no additional sharp restriction on the possible values of Kendall's tau.
The proof is constructive: two one-parameter families of shuffles of $M$ realize the extreme values of Kendall's tau along the lower boundary of the $(\phi,\beta)$-region, ordinal sums spread these families through the whole region, and the vertical fibres are filled using the biaffinity of the concordance function.
We further show that $\Omega_{\tau,\phi,\beta}$ is convex with rectangular fixed-footrule sections, identify an affine symmetry of its fibres about $\tau=\phi$, and compute its volume, which equals $\frac{31}{40}$.
\end{abstract}

\keywords{Copula; Rank correlation; Concordance measure; Attainable region; Shuffle of $M$; Ordinal sum; Biaffine concordance function; Medial correlation coefficient}
\par\smallskip\noindent\textbf{MSC 2020:} 62H05; 62H20

\section{Introduction}\label{sec:intro}

Rank correlation coefficients are among the most widely used summaries of dependence between two random quantities, because they are invariant under strictly increasing transformations of either margin and require no moment assumptions.
In practice, several such coefficients are often estimated from the same data set, either to narrow down a suitable dependence model or to capture different aspects of association.
At the population level, a joint attainable region records exactly which coefficient vectors can arise from a common dependence structure.
The study of exact regions goes back to the classical inequalities between Kendall's tau and Spearman's rho and has remained active ever since.
Here we settle the problem for Kendall's tau, Spearman's footrule and Blomqvist's beta, for which the answer takes a particularly simple form.

A (bivariate) \emph{copula} is a distribution function on $\I^2$ with uniform marginals; by Sklar's theorem, see \cite{sklar1959fonctions}, copulas couple multivariate distribution functions with their one-dimensional margins, and the copula is unique whenever the margins are continuous; see \cite{nelsen2006introduction,durante2016principles} for an overview.
We denote by $\C$ the class of all bivariate copulas, by $M(u,v)\coloneqq \min\{u,v\}$ the comonotonicity copula and by $W(u,v)\coloneqq \max\{0,u+v-1\}$ the countermonotonicity copula.
Measures of concordance, formalized axiomatically by Scarsini in \cite{scarsini1984measures}, summarize on the scale $[-1,1]$ the degree of positive dependence captured by a copula.
The most prominent of the three coefficients studied in this paper is Kendall's tau, introduced in \cite{kendall1938new}, which is the probability of concordance minus the probability of discordance of two independent copies of the underlying random vector and, in copula form, becomes
\begin{equation}\label{eq:tau-def}
\tau(C)\coloneqq 4\int_{\I^2} C(u,v)\de C(u,v)-1.
\end{equation}
Spearman's footrule, introduced in \cite{spearman1906footrule} as an easily computed companion to Spearman's rank correlation, averages absolute rank differences; in terms of the diagonal section $\delta_C(u)\coloneqq C(u,u)$ it can be written as
\begin{equation}\label{eq:phi-def}
\phi(C)\coloneqq 6\int_0^1 \delta_C(u)\de u-2.
\end{equation}
Blomqvist's beta, introduced in \cite{blomqvist1950measure} and also known as the medial correlation coefficient, compares the probability mass of the four quadrants determined by the medians and is given by
\begin{equation}\label{eq:beta-def}
\beta(C)\coloneqq 4C\bigl(\tfrac12,\tfrac12\bigr)-1.
\end{equation}
Kendall's tau and Blomqvist's beta are concordance measures in the sense of Scarsini, while Spearman's footrule is only a weak concordance measure with range $[-\frac12,1]$; see \cite{genest2010spearman} for a review of the footrule.

The exact form of the mutual constraints between such coefficients has received considerable attention.
Schreyer et al.\ determined the exact $(\tau,\rho)$-region for Spearman's rho $\rho$ in \cite{schreyer2017exact}, settling a long-standing open question.
The exact regions of Spearman's footrule and Gini's gamma $\gamma$ with respect to Blomqvist's beta were obtained in \cite{kokolbukovsek2021spearman}, the regions of Kendall's tau with respect to the footrule and to Gini's gamma in \cite{kokolbukovsek2023exact}, and the $(\tau,\beta)$-region in \cite[Thm.~5]{kokolbukovsek2019relation}.
Beyond feasibility, exact regions have also been used to quantify how strongly knowledge of one coefficient restricts another: the concordance similarity measure was introduced in \cite{kokolbukovsek2022exact} and subsequently applied to Kendall's tau in \cite[Sec.~5]{kokolbukovsek2023exact}.
Recently, the first exact regions determined by \emph{three} coefficients simultaneously have been described: the $(\beta,\phi,\gamma)$-region in \cite{kokolbukovsek2026exact} and the $(\phi,\gamma,\tau)$-region in \cite{kokolbukovsek2026footrule}.

In this paper we determine the exact region
\[
\Om\coloneqq \bigl\{(\tau(C),\phi(C),\beta(C)):C\in\C\bigr\}\subseteq\R^3.
\]
Throughout, we use the boundary functions
\begin{equation}\label{eq:boundary-functions}
L(b)\coloneqq \tfrac{3}{16}(1+b)^2-\tfrac12,\qquad
U(b)\coloneqq 1-\tfrac38(1-b)^2,\qquad
\ell(p)\coloneqq \tfrac43 p-\tfrac13,\qquad
u(p)\coloneqq \tfrac23 p+\tfrac13
\end{equation}
of the exact $(\phi,\beta)$-region \cite[Thm.~11]{kokolbukovsek2021spearman} and the exact $(\tau,\phi)$-region \cite[Thm.~4]{kokolbukovsek2023exact}.
The following theorem is our main result; Figure~\ref{fig:region} illustrates the region.

\begin{theorem}[Exact $(\tau,\phi,\beta)$-region]\label{thm:main}
A triple $(t,p,b)\in\R^3$ equals $(\tau(C),\phi(C),\beta(C))$ for some copula $C\in\C$ if and only if $-1\le b\le 1$, $L(b)\le p\le U(b)$ and $\ell(p)\le t\le u(p)$; that is,
\begin{equation}\label{eq:region}
\Om=\bigl\{(t,p,b)\in\R^3:\ -1\le b\le 1,\ L(b)\le p\le U(b),\ \ell(p)\le t\le u(p)\bigr\}.
\end{equation}
\end{theorem}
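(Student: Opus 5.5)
The inclusion ``$\subseteq$'' is immediate from the known two-dimensional results. For any $C\in\C$, the pair $(\phi(C),\beta(C))$ lies in the exact $(\phi,\beta)$-region of \cite[Thm.~11]{kokolbukovsek2021spearman}, so $-1\le\beta(C)\le1$ and $L(\beta(C))\le\phi(C)\le U(\beta(C))$. Likewise the pair $(\tau(C),\phi(C))$ lies in the exact $(\tau,\phi)$-region of \cite[Thm.~4]{kokolbukovsek2023exact}, so $\ell(\phi(C))\le\tau(C)\le u(\phi(C))$. The whole content of the theorem is therefore the reverse inclusion: every triple $(t,p,b)$ satisfying the three constraints must be realized by some copula.

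For ``$\supseteq$'' I would proceed in three steps.

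\emph{Step 1: lower $\beta$-boundary.} Fix $b\in[-1,1]$ and $p=L(b)$. I would construct two copulas $A_b$ and $B_b$, both shuffles of $M$ with $\beta=b$ and $\phi=L(b)$, satisfying $\tau(A_b)=\ell(L(b))$ and $\tau(B_b)=u(L(b))$. The natural candidates are obtained by modifying the known minimizers of $\phi$ for fixed $\beta$ from \cite{kokolbukovsek2021spearman}, which are built from pieces of $W$ placed on blocks around the median, into the straight-shuffle extremizers of \cite{kokolbukovsek2023exact}. For the upper $\tau$-value one uses many small comonotone blocks arranged anti-diagonally; for the lower value one uses counter-monotone pieces arranged so that the diagonal stays fixed. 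Both $\phi$ and $\beta$ depend only on $\delta_C$ and on $C(\tfrac12,\tfrac12)$, so the task is to find two shuffles with the same diagonal behaviour but extreme $\tau$. The values $\tau$, $\phi$ and $\beta$ of such shuffles are explicit polynomials in the shuffle parameters.

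\emph{Step 2: spreading through the region.} I would use ordinal sums $M$-$\oplus$: place $A_b$ (resp.\ $B_b$) on a square $[0,s]^2$ and $M$ on $[s,1]^2$, or alternatively place it on a central square around $(\tfrac12,\tfrac12)$. This raises $\phi$ toward $U(b)$, or to $1$, while keeping $\beta$ under control. Under an ordinal sum with $M$, both $\phi$ and $\tau$ transform explicitly: $\delta$ is rescaled, and $\tau$ of the ordinal sum equals $s^2\tau(\cdot)+1-s^2$. I would check that the images of $\ell$ and $u$ are again $\ell$ and $u$. This works because the lines $t=\ell(p)$ and $t=u(p)$ pass through $(1,1)$, and the ordinal-sum map is an affine contraction toward $M$, whose image point is $(1,1)$, with the same ratio on the $\tau$ and $\phi$ coordinates. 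By continuity in the parameters, the resulting two families of copulas $C^{\pm}_{p,b}$ cover every $(p,b)$ in the $(\phi,\beta)$-region, with $\tau(C^-_{p,b})=\ell(p)$ and $\tau(C^+_{p,b})=u(p)$. To reach $\beta$ values with $p$ near $U(b)$, where an $M$-ordinal sum does not preserve $\beta$, I would instead combine this with the symmetric construction based on the known maximizers, again proceeding by an intermediate value argument.

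\emph{Step 3: filling fibres.} I would take the convex combination $C_\lambda=\lambda C^-_{p,b}+(1-\lambda)C^+_{p,b}$. Since $\phi$ and $\beta$ are affine in $C$, they stay equal to $p$ and $b$. The map $\lambda\mapsto\tau(C_\lambda)$ is continuous (quadratic, by biaffinity of the concordance function $\Q$), so by the intermediate value theorem it attains every $t\in[\ell(p),u(p)]$.

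The main obstacle is Step 1, together with the covering claim in Step 2. One must find shuffles that simultaneously attain $\phi=L(b)$ and the extreme $\tau$ value; it is not a priori clear that the $\phi$-minimizers for fixed $\beta$ admit this much $\tau$-freedom. I would first try explicit shuffles with a few blocks, compute their $(\tau,\phi,\beta)$ symbolically, and match them to the boundary curves.
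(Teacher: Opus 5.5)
Your three-step architecture is the paper's: seeds on the lower edge $p=L(b)$, ordinal sums with $M$, then convex combinations and the intermediate value theorem. Step 3 is exactly right, and so is your remark that $\ell$ and $u$ commute with the contraction $z\mapsto\alpha^2z+1-\alpha^2$ toward $(1,1)$.

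The first gap is that the proof rests on Step 1, and you leave it open. You need, for every $x\in[-1,1]$, copulas with $(\phi,\beta)=(L(x),x)$ and with $\tau=\ell(L(x))=\frac14(1+x)^2-1$, respectively $\tau=u(L(x))=\frac18(1+x)^2$. That the lower edge of the $(\phi,\beta)$-region carries the full $\tau$-fibre is precisely the non-obvious content of the theorem. Your heuristic for the upper value also points the wrong way. For even $n$, $n$ equal comonotone blocks in reversed order have $\beta=-1$ and $\phi=-\frac12$ but $\tau=\frac2n-1$. So many small blocks drive $\tau$ toward $-1$, whereas $u(L(-1))=0$ is attained by the two-block swap of the halves of $[0,1]$. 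What works are two explicit small shuffles:
\begin{itemize}
\item $A_r=M\bigl(4,(\tfrac12-r,\tfrac12,\tfrac12+r),(4,2,3,1),(-1,-1,-1,-1)\bigr)$ for $r\in[0,\frac12]$, with $(\tau,\phi,\beta)=(4r^2-1,3r^2-\frac12,4r-1)$;
\item $D_q=M\bigl(6,(q,\tfrac12-q,\tfrac12,\tfrac12+q,1-q),(3,5,1,6,2,4),(1,\dots,1)\bigr)$ for $q\in[0,\frac14]$, with $(\tau,\phi,\beta)=(8q^2,12q^2-\frac12,8q-1)$.
\end{itemize}
Their $\tau$ follows from $\tau=\sum_i\varepsilon_is_i^2+2\sum_{i<j}\sgn(\pi(j)-\pi(i))s_is_j$, and their $\phi$ from a stripwise computation of the diagonal.

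The second gap is that the covering claim in Step 2 is asserted, not proved, and your hedge about $p$ near $U(b)$ shows the key mechanism was missed. The corner ordinal sum does not control $\beta$: for $s\le\frac12$ it forces $\beta=1$, and for $s>\frac12$ the value depends on $\delta_C(\frac1{2s})$ rather than on $\beta(C)$. Only the central sum $B_{a,C}$ with $\alpha=1-2a$ is usable. Under it, $\beta\mapsto\alpha\beta+1-\alpha$ while $\phi,\tau\mapsto\alpha^2(\cdot)+1-\alpha^2$.

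That $\beta$ moves is not an obstruction; it is what makes the covering work. Fix $b$, put $k=1-b$, and for $\alpha\in[\frac k2,1]$ apply the central sum to the seeds at $x=1-\frac k\alpha$. Then $\beta=b$ and
\[
\phi=\alpha^2L(x)+1-\alpha^2=1-\tfrac34\alpha^2-\tfrac34k\alpha+\tfrac3{16}k^2 .
\]
This decreases continuously from $U(b)$ at $\alpha=\frac k2$ (where $x=-1$ and the seeds are $W$ and the half-swap) to $L(b)$ at $\alpha=1$. Hence the lower-edge seeds alone reach every $(p,b)$, including the upper boundary, and no second construction from the $\phi$-maximizers is needed.

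Your fallback is unspecified in any case. If it means mixing copulas taken from the two boundaries, it fails. For a convex combination $C_\lambda$ of two copulas lying on the plane $t=\ell(p)$,
\[
\tau(C_\lambda)-\ell(\phi(C_\lambda))=\lambda(1-\lambda)\bigl(2\Q(C_1,C_2)-\tau(C_1)-\tau(C_2)\bigr),
\]
which is generically positive. Such mixtures therefore do not produce the exact extremal copulas $C^\pm$ that Step 3 requires.
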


\begin{figure}[t!]
\centering
\includegraphics[width=0.72\textwidth]{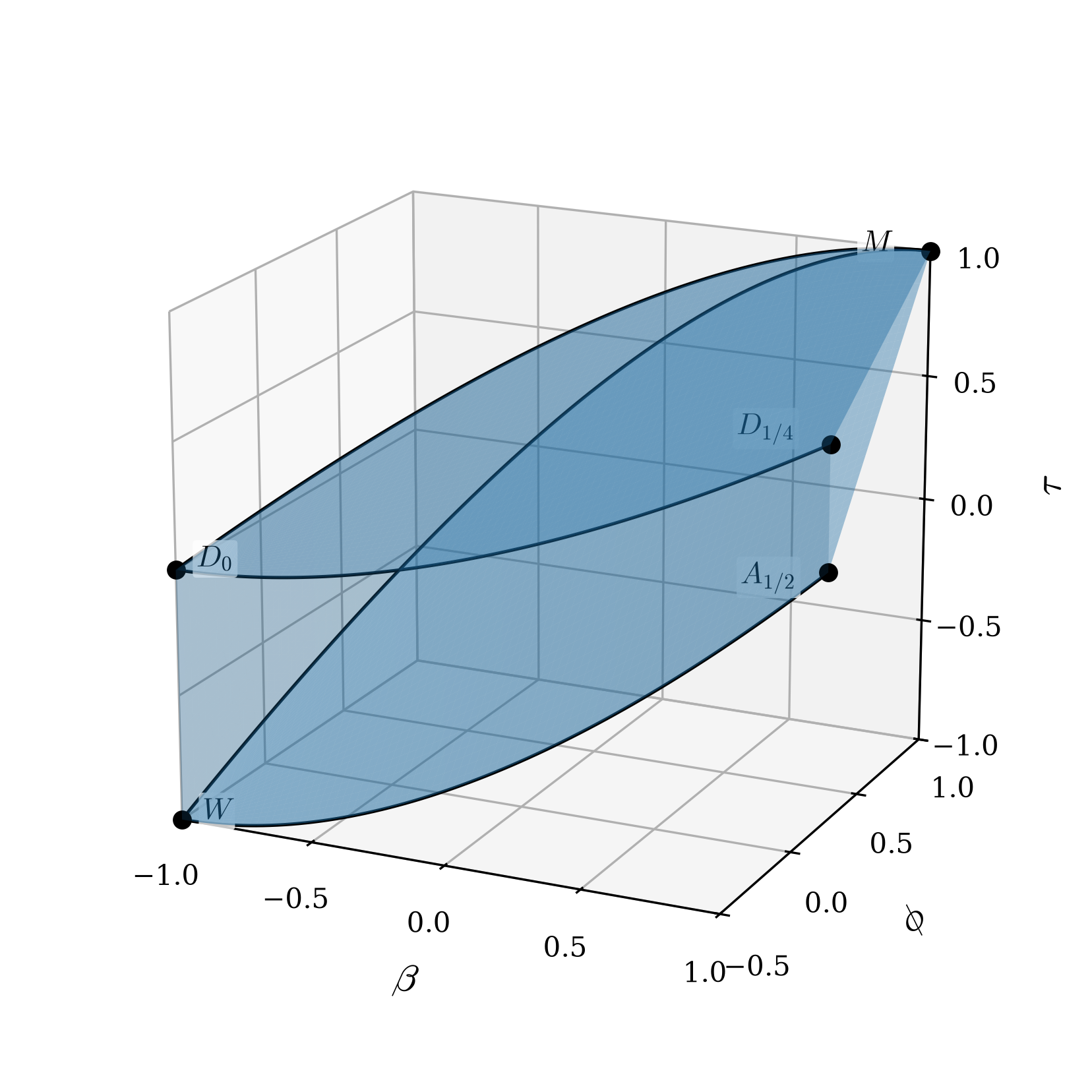}
\caption{
  The exact region $\Om$ from Theorem~\ref{thm:main}, plotted using \eqref{eq:region}.
  The lower and upper faces are the planes $t=\ell(p)=\frac43p-\frac13$ and $t=u(p)=\frac23p+\frac13$, and the side faces lie above the parabolas $p=L(b)$ and $p=U(b)$ that bound the $(\phi,\beta)$-region.
  For each admissible pair $(p,b)$, the vertical fibre is the interval $[\ell(p),u(p)]$ and has length $\frac23(1-p)$, independent of $b$.
  The labelled black dots mark the five corner points and the copulas that attain them.
  The families $A_r$ and $D_q$ from Section~\ref{sec:seeds} trace the edges $W$--$A_{1/2}$ and $D_0$--$D_{1/4}$, and the ordinal-sum families $B_{a,W}$ and $B_{a,D_0}$ trace the other two edges, $W$--$M$ and $D_0$--$M$, see Remark~\ref{rem:boundary-edges}.
}
\label{fig:region}
\end{figure}

The inclusion ``$\subseteq$'' in \eqref{eq:region} follows directly from the known bivariate regions (Proposition~\ref{prop:known} below).
Theorem~\ref{thm:main} shows that there is no further constraint: $\Om$ is exactly the intersection of the two cylinders over the $(\tau,\phi)$- and $(\phi,\beta)$-regions.
Equivalently, for every admissible pair $(p,b)$ the fibre $\{t:(t,p,b)\in\Om\}$ is the full interval $[\ell(p),u(p)]$, so that, given the value of Spearman's footrule, the value of Blomqvist's beta imposes no additional sharp restriction on Kendall's tau.
This purely bivariate constraint structure contrasts with the regions determined in \cite{kokolbukovsek2026exact,kokolbukovsek2026footrule}, whose boundaries contain faces described by genuinely ternary constraints.

The proof gives an explicit construction.
We first give two one-parameter families of shuffles of $M$ that attain the extreme values of Kendall's tau along the lower boundary $p=L(b)$ of the $(\phi,\beta)$-region; see Propositions~\ref{prop:lower-seed} and~\ref{prop:upper-seed} and Figures~\ref{fig:A-seeds} and~\ref{fig:D-seeds}.
Ordinal sums with $M$ then transport these families to every admissible pair $(p,b)$, the key observation being that $\tau$ and $\phi$ transform under such ordinal sums with the same quadratic factor while $\beta$ transforms linearly, see \eqref{eq:ordinal-sum} below.
Finally, $\tau(C)=\Q(C,C)$ for the biaffine concordance function $\Q$, while $\phi$ and $\beta$ are affine in the copula.
The intermediate value theorem applied to convex combinations then fills each vertical fibre.

The rest of the paper is organized as follows.
Section~\ref{sec:prelim} collects notation, the two known bivariate regions and the transformation behaviour of the three coefficients under ordinal sums.
In Section~\ref{sec:seeds} we construct the two shuffle families and compute their coefficient values by means of a closed formula for Kendall's tau of a shuffle of $M$ (Lemma~\ref{lem:shuffle-tau}).
Section~\ref{sec:proof} contains the proof of Theorem~\ref{thm:main}, the geometry of its fixed-footrule sections (Corollary~\ref{cor:fixed-phi}) and the resulting $(\tau,\beta)$-projection (Corollary~\ref{cor:beta-tau}), which recovers the region of \cite{kokolbukovsek2019relation}.
Section~\ref{sec:conclusion} closes with the volume of $\Om$ and some open problems.

\section{Preliminaries}\label{sec:prelim}

The set $\C$ is convex.
Throughout, an integral with respect to a copula denotes the Lebesgue--Stieltjes integral with respect to the probability measure induced by that copula.
For $C_1,C_2\in\C$, the \emph{concordance function} introduced by Kruskal in \cite{kruskal1958ordinal} is
\begin{equation}\label{eq:Q-def}
\Q(C_1,C_2)\coloneqq 4\int_{\I^2} C_2(u,v)\de C_1(u,v)-1,
\end{equation}
so that $\tau(C)=\Q(C,C)$ by \eqref{eq:tau-def}.
The function $\Q$ is symmetric in its two arguments, see \cite[Cor.~5.1.2]{nelsen2006introduction}.
It is affine in its second argument by linearity of the integral and in its first argument because a convex combination of copulas induces the same convex combination of their probability measures.

Our proof uses the following two known exact regions.

\begin{proposition}[Known bivariate regions]\label{prop:known}
Every copula $C\in\C$ satisfies
\begin{align}
\ell(\phi(C))&\le\tau(C)\le u(\phi(C)),\label{eq:ft-region}\\
L(\beta(C))&\le\phi(C)\le U(\beta(C)),\label{eq:bf-region}
\end{align}
with $L,U,\ell,u$ as in \eqref{eq:boundary-functions}, and both pairs of bounds are pointwise attained.
\end{proposition}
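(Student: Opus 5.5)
The proposition is not a new result: it collects two exact regions from the literature, so the proof will mainly be a careful reduction to the cited theorems. I would treat the two displayed inequalities separately.

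For \eqref{eq:bf-region}, I would invoke \cite[Thm.~11]{kokolbukovsek2021spearman}. For \eqref{eq:ft-region}, I would invoke \cite[Thm.~4]{kokolbukovsek2023exact}. Both theorems describe the exact regions as the sets between explicit lower and upper boundary curves, and they state that every boundary point is attained by a copula. The one thing to check is that the normalizations agree with those used here.

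\begin{itemize}
\item \emph{Kendall's tau.} The cited paper uses the same form \eqref{eq:tau-def}.
\item \emph{Spearman's footrule.} It is often defined as $1-3\int_{\I^2}|u-v|\de C(u,v)$. I will verify that this equals \eqref{eq:phi-def}. Write $|u-v|=u+v-2\min\{u,v\}$ and use $\int_{\I^2}(u+v)\de C=1$. This gives $\phi(C)=6\int_{\I^2}\min\{u,v\}\de C-2$. By Fubini, $\int_{\I^2}\min\{u,v\}\de C=\int_0^1 P(U>s,V>s)\de s=\int_0^1(1-2s+\delta_C(s))\de s=\int_0^1\delta_C(s)\de s$.
\item \emph{Blomqvist's beta.} It is standardly $4C(\tfrac12,\tfrac12)-1$, matching \eqref{eq:beta-def}.
\end{itemize}

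After this, the boundary curves of the cited theorems only need to be rewritten in the variables $(t,p,b)$. This should yield exactly $L,U,\ell,u$ from \eqref{eq:boundary-functions}.

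As a consistency check, I would evaluate the corner copulas:
\begin{itemize}
\item $M$ gives $\tau=\phi=\beta=1$, and indeed $u(1)=\ell(1)=1$ and $U(1)=1$.
\item $W$ gives $\tau=\beta=-1$ and $\phi=-\tfrac12$, and indeed $\ell(-\tfrac12)=-1$ and $L(-1)=-\tfrac12$.
\end{itemize}

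For the attainment claims, I would note that the cited theorems already assert that each boundary point is realized. For instance, the lower $(\phi,\beta)$-boundary is realized by shuffles of $M$. The new work in this paper is that the attaining copulas can be chosen to control all three coefficients simultaneously, and that is done later in Section~\ref{sec:seeds}.

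The only step I expect to require care is the normalization matching above, in particular the footrule identity. Beyond that, the proof is purely bibliographic. No genuinely new inequality needs to be proved here, since both inequalities in Proposition~\ref{prop:known} are imported verbatim.
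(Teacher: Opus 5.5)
Your proposal is correct and follows the paper, which likewise establishes Proposition~\ref{prop:known} purely by citation: \cite[Thm.~4]{kokolbukovsek2023exact} for the $(\tau,\phi)$-bounds and \cite[Thm.~11]{kokolbukovsek2021spearman} for the $(\phi,\beta)$-bounds, including their attainment. Your extra check that $1-3\int_{\I^2}|u-v|\de C$ agrees with \eqref{eq:phi-def}, and your corner checks at $M$ and $W$, are correct but go beyond what the paper records.
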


The $(\tau,\phi)$-region \eqref{eq:ft-region} was determined in \cite[Thm.~4]{kokolbukovsek2023exact} and is also collected in \cite[Prop.~2.1]{kokolbukovsek2026footrule}; the $(\phi,\beta)$-region \eqref{eq:bf-region} is given in \cite[Thm.~11]{kokolbukovsek2021spearman}, see also \cite[Thm.~7]{kokolbukovsek2019relation}.

A \emph{shuffle of $M$} in the sense of \cite{mikusinski1992shuffles} is a copula $C=M(n,J,\pi,\varepsilon)$ specified by a partition $J=(x_1,\dots,x_{n-1})$, $0=x_0\le x_1\le\dots\le x_n=1$, of $\I$ into $n$ strips of widths $s_i\coloneqq x_i-x_{i-1}$, a permutation $\pi$ of $\{1,\dots,n\}$ (written below in one-line notation) and a vector of signs $\varepsilon\in\{-1,1\}^n$.
Writing $0=y_0\le y_1\le\dots\le y_n=1$ for the partition of the second coordinate with band widths $y_k-y_{k-1}\coloneqq s_{\pi^{-1}(k)}$, the copula $M(n,J,\pi,\varepsilon)$ distributes mass $s_i$ uniformly on the diagonal (if $\varepsilon_i=1$) or the antidiagonal (if $\varepsilon_i=-1$) of the square $[x_{i-1},x_i]\times[y_{\pi(i)-1},y_{\pi(i)}]$, for $i=1,\dots,n$.
We allow zero strip widths in this definition, so that the endpoint cases used below are included as possibly degenerate shuffles of $M$; equivalently, they may be obtained by continuity from the corresponding nondegenerate shuffles.

For $a\in[0,\frac12]$ and $C\in\C$, let $B_{a,C}$ denote the ordinal sum of $\{C\}$ with respect to the single interval $(a,1-a)$, i.e., the copula that rescales $C$ to the central square $[a,1-a]^2$ and coincides with $M$ elsewhere on $\I^2$; for ordinal sums see \cite[Sec.~3.2.2]{nelsen2006introduction}.
Writing $\alpha\coloneqq 1-2a\in[0,1]$, the formulas of \cite[Lem.~3.8]{kokolbukovsek2026footrule} read
\begin{equation}\label{eq:ordinal-sum}
\tau(B_{a,C})=\alpha^2\tau(C)+1-\alpha^2,\qquad
\phi(B_{a,C})=\alpha^2\phi(C)+1-\alpha^2,\qquad
\beta(B_{a,C})=\alpha\,\beta(C)+1-\alpha.
\end{equation}
Here $\tau$ and $\phi$ transform by the same affine map $z\mapsto\alpha^2z+1-\alpha^2$, while $\beta$ has a linear factor.

\section{Two families of shuffles}\label{sec:seeds}

To prove Theorem~\ref{thm:main}, we need copulas that attain the extreme values $\ell(p)$ and $u(p)$ of Kendall's tau for a prescribed pair $(\phi,\beta)=(p,b)$ on the lower boundary $p=L(b)$ of the $(\phi,\beta)$-region.
We construct them as shuffles of $M$, shown in Figures~\ref{fig:A-seeds} and~\ref{fig:D-seeds}, and compute their Kendall's tau using the following formula.

\begin{lemma}[Kendall's tau of a shuffle of $M$]\label{lem:shuffle-tau}
Every shuffle of $M$, $C=M(n,J,\pi,\varepsilon)$, with strip widths $s_1,\dots,s_n$ satisfies
\begin{equation}\label{eq:shuffle-tau}
\tau(C)=\sum_{i=1}^n\varepsilon_i s_i^2
+2\sum_{1\le i<j\le n}\sgn\bigl(\pi(j)-\pi(i)\bigr)\,s_i s_j.
\end{equation}
\end{lemma}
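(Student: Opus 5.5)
We use the probabilistic form of Kendall's tau. If $(X,Y)$ and $(X',Y')$ are independent with distribution $C$, then
\[
\tau(C)=\Pr\bigl[(X-X')(Y-Y')>0\bigr]-\Pr\bigl[(X-X')(Y-Y')<0\bigr],
\]
which is equivalent to \eqref{eq:tau-def}. Since $\tau$ is continuous in $C$, and the right-hand side of \eqref{eq:shuffle-tau} is continuous in $(s_1,\dots,s_n)$, it suffices to treat nondegenerate shuffles with all $s_i>0$. Degenerate strips contribute nothing to either side, so they can also be deleted directly.

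Condition on which strips the two independent points fall into. With probability $s_is_j$ the first point lies in strip $i$ and the second in strip $j$. We split into diagonal pairs $i=j$ and off-diagonal pairs $i\ne j$.

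\emph{Off-diagonal pairs.} Let $i<j$. Then $X<X'$ almost surely, because strip $i$ lies to the left of strip $j$. The second coordinates lie in the bands $[y_{\pi(i)-1},y_{\pi(i)}]$ and $[y_{\pi(j)-1},y_{\pi(j)}]$. These bands are disjoint apart from endpoints, so $Y<Y'$ almost surely if $\pi(i)<\pi(j)$, and $Y>Y'$ almost surely otherwise. The pair is therefore concordant or discordant almost surely, with sign $\sgn(\pi(j)-\pi(i))$. The ordered pairs $(i,j)$ and $(j,i)$ give the same sign. Their combined contribution is $2\sgn(\pi(j)-\pi(i))\,s_is_j$.

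\emph{Diagonal pairs.} Here both points lie in the same strip $i$. Conditionally they are independent and uniform on the diagonal or antidiagonal segment of the square. On the diagonal, $Y-Y'$ is a positive multiple of $X-X'$. Ties have probability zero, since the conditional law of $X$ is uniform, hence atomless. The pair is thus concordant almost surely. On the antidiagonal it is discordant almost surely. The contribution is $\varepsilon_i s_i^2$.

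Summing all contributions gives \eqref{eq:shuffle-tau}. No step is really an obstacle. The only care needed is to verify that ties and band boundaries are null events, which holds because every piece of mass is absolutely continuous along its segment.
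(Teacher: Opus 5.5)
Your proposal is correct and follows essentially the same route as the paper: you use the probabilistic form of $\tau$, delete zero-width strips, and split into same-strip pairs (contributing $\varepsilon_i s_i^2$) and distinct-strip pairs (contributing $2\sgn(\pi(j)-\pi(i))\,s_is_j$). Your continuity remark is not needed, since the direct deletion argument you also give is exactly the one the paper uses.
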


\begin{proof}
Let $(X,Y)$ and $(X',Y')$ be independent random vectors with copula $C$.
Deleting zero-width strips and relabelling does not change either the copula or the right-hand side of \eqref{eq:shuffle-tau}, so we may assume that $s_i>0$ for every $i$.
By \cite[Sec.~5.1.1]{nelsen2006introduction},
\[
\tau(C)=\mathbb{P}\bigl((X-X')(Y-Y')>0\bigr)-\mathbb{P}\bigl((X-X')(Y-Y')<0\bigr).
\]
Both points fall into strip $i$ with probability $s_i^2$.
Given this event, they are almost surely concordant if $\varepsilon_i=1$ and almost surely discordant if $\varepsilon_i=-1$, giving the contribution $\varepsilon_i s_i^2$.
The two points fall into distinct strips $i<j$ with probability $2s_is_j$.
In this case they are concordant exactly when the vertical order of the bands agrees with the horizontal order of the strips, that is, when $\pi(i)<\pi(j)$, giving the contribution $2\sgn(\pi(j)-\pi(i))s_is_j$.
These events are exhaustive up to null sets, and summing the contributions proves \eqref{eq:shuffle-tau}.
\end{proof}

\begin{proposition}[Lower seed]\label{prop:lower-seed}
For $r\in[0,\frac12]$ let
\[
A_r\coloneqq M\Bigl(4,\bigl(\tfrac12-r,\tfrac12,\tfrac12+r\bigr),(4,2,3,1),(-1,-1,-1,-1)\Bigr).
\]
Then
\[
\bigl(\tau(A_r),\phi(A_r),\beta(A_r)\bigr)
=\bigl(\ell(L(4r-1)),\;L(4r-1),\;4r-1\bigr)
=\bigl(4r^2-1,\;3r^2-\tfrac12,\;4r-1\bigr).
\]
\end{proposition}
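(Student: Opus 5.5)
The plan is to compute each of the three coefficients of $A_r$ directly from its explicit mass distribution, and then check the two closed-form identities $L(4r-1)=3r^2-\frac12$ and $\ell(3r^2-\frac12)=4r^2-1$ by substitution. First I record the geometry. The strip widths are $s_1=s_4=c\coloneqq\frac12-r$ and $s_2=s_3=r$. Since $\pi=(4,2,3,1)$, the band widths are $y_1-y_0=s_4=c$, $y_2-y_1=s_2=r$, $y_3-y_2=s_3=r$ and $y_4-y_3=s_1=c$. So the vertical partition coincides with the horizontal one, $(\frac12-r,\frac12,\frac12+r)$, and the mass sits on four antidiagonals as follows:
\begin{itemize}
\item strip $1$ on $[0,c]\times[1-c,1]$;
\item strip $2$ on $[c,\frac12]^2$;
\item strip $3$ on $[\frac12,\frac12+r]^2$;
\item strip $4$ on $[1-c,1]\times[0,c]$.
\end{itemize}
For $r\in\{0,\frac12\}$ some strips degenerate, and I handle these cases by the convention on zero widths (or by continuity).

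For Kendall's tau I apply Lemma~\ref{lem:shuffle-tau}. The diagonal terms give $-(2c^2+2r^2)$. Among the six pairs $i<j$, only $(2,3)$ has $\pi(i)<\pi(j)$, so the cross terms give $2(r^2-c^2-4cr)$. Adding, $\tau(A_r)=-4c^2-8cr=-4c(c+2r)=-(1-2r)(1+2r)=4r^2-1$. For Blomqvist's beta, the mass of $[0,\frac12]^2$ comes only from strip $2$, because strip $1$ lies in the top band. Hence $A_r(\frac12,\frac12)=r$ and $\beta(A_r)=4r-1$.

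For the footrule I compute the diagonal section piecewise on four intervals.
\begin{itemize}
\item On $[0,c]$ we have $\delta\equiv0$, since strip $1$ lies above height $1-c\ge\frac12$.
\item On $[c,\frac12]$ the section is the rescaled $W$-diagonal of the square of side $r$, namely $\max\{0,2(u-c)-r\}$, which contributes $r^2/4$.
\item On $[\frac12,\frac12+r]$ it is $r+\max\{0,2(u-\frac12)-r\}$, which contributes $r^2+r^2/4$.
\item On $[\frac12+r,1]$ the only mass of $[0,u]\times[0,1]$ lying above $u$ comes from strip $1$ and equals $1-u$, so $\delta(u)=2u-1$. This piece contributes $\frac14-r^2$.
\end{itemize}
Summing, $\int_0^1\delta=\frac14+\frac{r^2}{2}$, and therefore $\phi(A_r)=\frac32+3r^2-2=3r^2-\frac12$.

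Finally, the identities $L(4r-1)=\frac3{16}(4r)^2-\frac12=3r^2-\frac12$ and $\ell(3r^2-\frac12)=4r^2-2+\frac23-\frac13\cdot\ldots$, more precisely $\tfrac43(3r^2-\tfrac12)-\tfrac13=4r^2-1$, are immediate substitutions. The main obstacle is the footrule computation, specifically getting the diagonal section right on the middle square, where two rescaled copies of $W$ interact with the level $u$. Everything else is bookkeeping, and the tau formula in Lemma~\ref{lem:shuffle-tau} makes that step mechanical.
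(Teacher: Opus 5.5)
Your proposal is correct and follows the paper's proof essentially step for step. It uses the same within-strip and between-strip bookkeeping in Lemma~\ref{lem:shuffle-tau}, the same quadrant-mass argument for $\beta$, and the same piecewise diagonal section: your $\max$-expressions on $[c,\frac12]$ and $[\frac12,\frac12+r]$ are exactly the paper's pieces with breakpoints $\frac12\mp\frac r2$. The only blemish is the garbled false start in your check of $\ell(3r^2-\frac12)$; delete it and keep just the correct line $\frac43(3r^2-\frac12)-\frac13=4r^2-1$.
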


\begin{figure}[htbp!]
\centering
\includegraphics[width=0.95\textwidth]{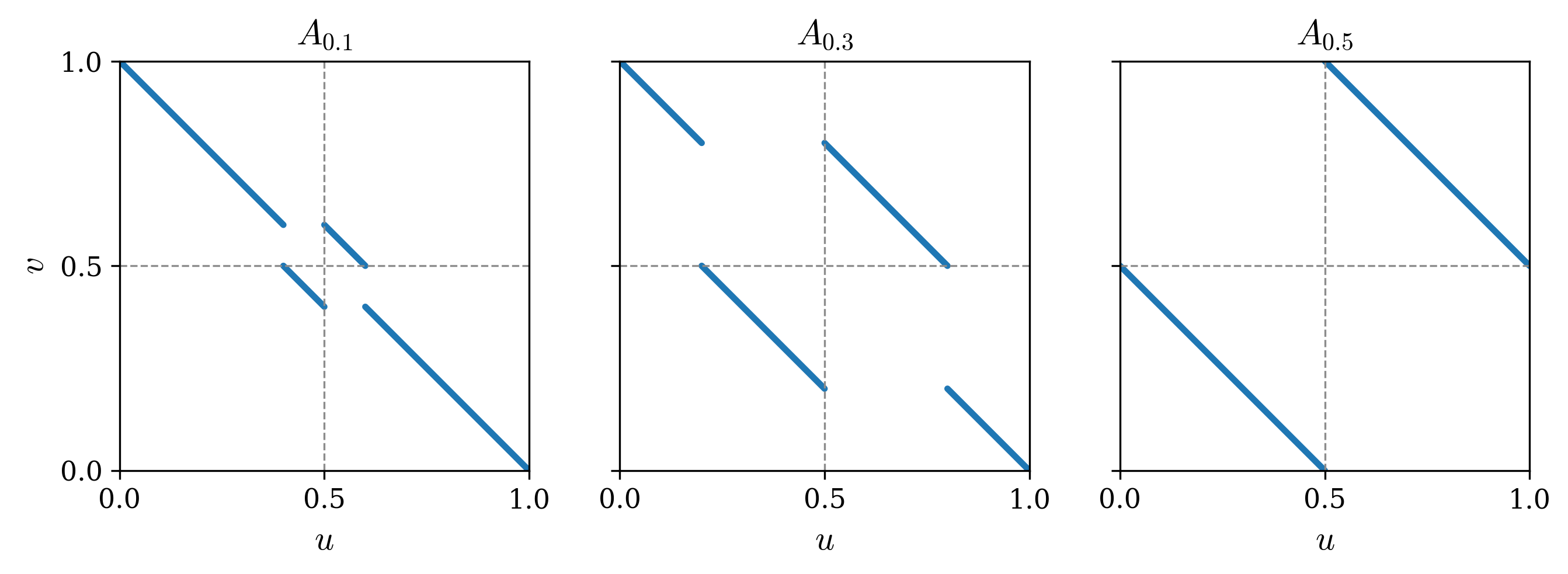}
\caption{Supports of the lower seeds $A_r$ of Proposition~\ref{prop:lower-seed} for $r=0.1$, $0.3$ and $0.5$; dashed lines mark $u=\frac12$ and $v=\frac12$.
The copulas spread their mass uniformly on antidiagonal segments.
As $r$ increases, the two central segments lengthen while the two outer segments shrink, degenerating at $r=\frac12$.
The family traces the lower edge of $\Om$ above the boundary $\phi=L(\beta)$, with $(\tau,\phi,\beta)=(4r^2-1,3r^2-\frac12,4r-1)$.}
\label{fig:A-seeds}
\end{figure}

\begin{proof}
Write $a\coloneqq \frac12-r$, so that the strip widths of $A_r$ are $(a,r,r,a)$ and the band widths are likewise $(a,r,r,a)$.
The permutation $(4,2,3,1)$ maps the first strip to the top band $[\frac12+r,1]$ and the second strip, of width $r$, to the band $[a,\frac12]\subseteq[0,\frac12]$, so the mass in $[0,\frac12]^2$ equals $r$ and $\beta(A_r)=4r-1$ by \eqref{eq:beta-def}.
A direct computation of the diagonal section gives
\[
\delta_{A_r}(x)=
\begin{cases}
0, & 0\le x\le \frac12-\frac r2,\\
2x+r-1, & \frac12-\frac r2<x\le \frac12,\\
r, & \frac12<x\le \frac12+\frac r2,\\
2x-1, & \frac12+\frac r2<x\le 1,
\end{cases}
\]
so $\int_0^1\delta_{A_r}(x)\de x=\frac14+\frac{r^2}{2}$ and, by \eqref{eq:phi-def}, $\phi(A_r)=3r^2-\frac12=L(4r-1)$.
In Lemma~\ref{lem:shuffle-tau} the within-strip terms sum to $-2a^2-2r^2$ and the between-strip terms to $2(r^2-a^2-4ar)$, so
\[
\tau(A_r)=-4a^2-8ar=-4a(a+2r)=-4\bigl(\tfrac12-r\bigr)\bigl(\tfrac12+r\bigr)=4r^2-1,
\]
and indeed $\ell(3r^2-\frac12)=4r^2-1$.
\end{proof}

\begin{proposition}[Upper seed]\label{prop:upper-seed}
For $q\in[0,\frac14]$ let
\[
D_q\coloneqq M\Bigl(6,\bigl(q,\tfrac12-q,\tfrac12,\tfrac12+q,1-q\bigr),(3,5,1,6,2,4),(1,1,1,1,1,1)\Bigr).
\]
Then
\[
\bigl(\tau(D_q),\phi(D_q),\beta(D_q)\bigr)
=\bigl(u(L(8q-1)),\;L(8q-1),\;8q-1\bigr)
=\bigl(8q^2,\;12q^2-\tfrac12,\;8q-1\bigr).
\]
\end{proposition}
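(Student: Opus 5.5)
The plan follows the proof of Proposition~\ref{prop:lower-seed}: read off the strip and band geometry, get $\beta$ from the mass of $[0,\frac12]^2$, get $\phi$ by integrating the diagonal section piecewise, and get $\tau$ from Lemma~\ref{lem:shuffle-tau}. Finally I will check the identities $L(8q-1)=12q^2-\frac12$ and $u(12q^2-\frac12)=8q^2$, which are elementary.

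\textbf{Geometry.} Put $c\coloneqq\frac12-2q\ge0$. The strip widths are $(q,c,q,q,c,q)$. The inverse permutation is $\pi^{-1}=(3,5,1,6,2,4)$, so the band widths are again $(q,c,q,q,c,q)$. The bands are therefore $[0,q]$, $[q,\frac12-q]$, $[\frac12-q,\frac12]$, $[\frac12,\frac12+q]$, $[\frac12+q,1-q]$ and $[1-q,1]$. The strips lying in $u\le\frac12$ are strips $1,2,3$, with bands $3,5,1$. Only bands $3$ and $1$ lie in $v\le\frac12$, so the mass of $[0,\frac12]^2$ is $2q$. Hence $\beta(D_q)=8q-1$ by \eqref{eq:beta-def}.

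\textbf{Kendall's tau.} I apply \eqref{eq:shuffle-tau}. All signs are $+1$, so the within-strip terms give $4q^2+2c^2$. For the cross terms, I use that the total cross sum is $\sum_{i<j}s_is_j=\frac12\bigl(1-4q^2-2c^2\bigr)$. Then I list the discordant pairs, i.e.\ the inversions of $(3,5,1,6,2,4)$: these are $(1,3),(1,5),(2,3),(2,5),(2,6),(4,5),(4,6)$. Their weights sum to $D\coloneqq 2q^2+4qc+c^2$, so the cross contribution is $2\bigl(\sum_{i<j}s_is_j-2D\bigr)$. This gives
\[
\tau(D_q)=1-8q^2-16qc-4c^2,
\]
and substituting $c=\frac12-2q$ yields $8q^2$.

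\textbf{Spearman's footrule.} I compute $\delta_{D_q}(x)=C(x,x)$ piecewise on the breakpoints $0,q,\frac12-q,\frac12,\frac12+q,1-q,1$, together with any intermediate points where a diagonal segment crosses the main diagonal. For each strip I record how much of its mass lies in $[0,x]^2$. On $[0,\frac12]$, the only strips whose bands lie low are strip $1$ (band $[\frac12-q,\frac12]$) and strip $3$ (band $[0,q]$), so $\delta$ is piecewise linear with small pieces there. On $[\frac12,1]$, I will use the point symmetry of the support under $(u,v)\mapsto(1-u,1-v)$: strips $i$ and $7-i$ have $\pi(7-i)=7-\pi(i)$, which I will verify. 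This gives $\delta(x)=2x-1+\delta(1-x)$, so only the left half needs to be integrated. The target is $\int_0^1\delta_{D_q}=\frac14+2q^2$, whence $\phi(D_q)=6\bigl(\frac14+2q^2\bigr)-2=12q^2-\frac12$ by \eqref{eq:phi-def}.

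\textbf{Main obstacle.} I expect the diagonal-section bookkeeping to be the only delicate step: identifying exactly which segments meet $[0,x]^2$ and the kink locations. To handle it, I would first sanity-check the endpoint cases. At $q=0$ the copula should be $W$-like in $\phi$, giving $\phi=-\frac12$. At $q=\frac14$ the middle strips have width $0$ and the computation simplifies, and the value should be $\phi=\frac14$. I would then interpolate, trusting the piecewise-linear structure.
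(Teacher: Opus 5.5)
\textbf{Verdict: genuine gap in the $\phi$ step; $\beta$ and $\tau$ are fine.} Your computations of $\beta$ and $\tau$ are correct and match the paper. Listing the seven inversions, getting $D=2q^2+4qc+c^2$ and writing $\tau=1-4D$ is a clean way to organise the between-strip sum.

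The problem is $\phi$. You never evaluate $\int_0^1\delta_{D_q}$; you only name the target $\frac14+2q^2$. Your way past the ``delicate step'' is to check $q=0$ and $q=\frac14$ and then interpolate, and that does not prove it. The quantity $\int_0^1\delta_{D_q}$ is quadratic in $q$, with endpoint values $\frac14$ and $\frac38$. Two values cannot pin down a quadratic, and linear interpolation in $q$ would give the wrong value $\frac14+\frac q2$. Interpolating $\delta_{D_q}$ linearly in $x$ between its kinks would be legitimate. But then you still need its values at the kinks for general $q$, which is exactly the missing computation.

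The ingredients you already identified close this in a few lines. Take $x\le\frac12$.
\begin{itemize}
\item Strip $2$ lies in $v\ge\frac12+q$, and strips $4,5,6$ lie in $u\ge\frac12$, so none of them contributes to $\delta_{D_q}(x)$.
\item Strip $1$ lies on $v=u+\frac12-q$, and strip $3$ lies on $v=u-\frac12+q\le u$. Using $q\le\frac12-q$, each contributes exactly $\max\{0,x-\frac12+q\}$.
\end{itemize}
Hence $\delta_{D_q}(x)=2\max\{0,x-\frac12+q\}$ on $[0,\frac12]$, and $\int_0^{1/2}\delta_{D_q}(x)\de x=q^2$.

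Your radial-symmetry claim is valid, since $s_i=s_{7-i}$ and $\pi(7-i)=7-\pi(i)$ for all $i$. It gives $\int_{1/2}^1\delta_{D_q}=\int_{1/2}^1(2x-1)\de x+\int_0^{1/2}\delta_{D_q}=\frac14+q^2$. So $\int_0^1\delta_{D_q}=\frac14+2q^2$ and $\phi(D_q)=12q^2-\frac12$.

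Completed this way, your $\phi$ argument is a genuinely different and shorter route than the paper's. The paper writes $\int_0^1\delta_{D_q}=\mathbb{E}[1-\max\{X,Y\}]$ and integrates $1-\max\{x,h(x)\}$ over all six strips. Your symmetry reduction cuts the work to two strips on half the interval.
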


\begin{proof}
The strip widths of $D_q$ are $(q,a,q,q,a,q)$ with $a\coloneqq \frac12-2q$.
The three strips contained in $[0,\frac12]$ are mapped to the bands $[\frac12-q,\frac12]$, $[\frac12+q,1-q]$ and $[0,q]$, respectively, so the mass in $[0,\frac12]^2$ equals $2q$ and $\beta(D_q)=8q-1$ by \eqref{eq:beta-def}.

We next compute Spearman's footrule from the diagonal section.
If $(X,Y)$ has copula $D_q$, then $\delta_{D_q}(u)=\mathbb{P}(\max\{X,Y\}\le u)$, and Fubini's theorem gives $\int_0^1\delta_{D_q}(u)\de u=\mathbb{E}[1-\max\{X,Y\}]$.
Since all signs of $D_q$ are positive, $D_q$ is supported on the graph of the measure-preserving map
\[
h(x)=
\begin{cases}
x+\frac12-q, & 0\le x<q,\\
x+\frac12, & q\le x<\frac12-q,\\
x-\frac12+q, & \frac12-q\le x<\frac12,\\
x+\frac12-q, & \frac12\le x<\frac12+q,\\
x-\frac12, & \frac12+q\le x<1-q,\\
x-\frac12+q, & 1-q\le x\le1,
\end{cases}
\]
where formulas on a zero-length strip are immaterial.
Writing $I_i\coloneqq \int_{x_{i-1}}^{x_i}(1-\max\{x,h(x)\})\de x$ for the contribution of the $i$th strip, a direct stripwise integration gives
\[
I_1=I_3=\frac q2+\frac{q^2}{2},\qquad
I_2=I_5=\frac18-\frac q2,\qquad
I_4=I_6=\frac{q^2}{2},
\]
so that
\[
\int_0^1\delta_{D_q}(u)\de u=\sum_{i=1}^6 I_i=\frac14+2q^2
\qquad\text{and}\qquad
\phi(D_q)=6\Bigl(\frac14+2q^2\Bigr)-2=12q^2-\frac12
\]
by \eqref{eq:phi-def}.

Finally, Kendall's tau follows from Lemma~\ref{lem:shuffle-tau}: the within-strip terms sum to $4q^2+2a^2$, while the between-strip terms sum to $2(2q^2-a^2)$, so
\[
\tau(D_q)=4q^2+2a^2+2(2q^2-a^2)=8q^2.
\]
Thus $L(8q-1)=12q^2-\frac12=\phi(D_q)$ and $u(12q^2-\frac12)=8q^2=\tau(D_q)$.
\end{proof}

\begin{figure}[tbp!]
\centering
\includegraphics[width=0.95\textwidth]{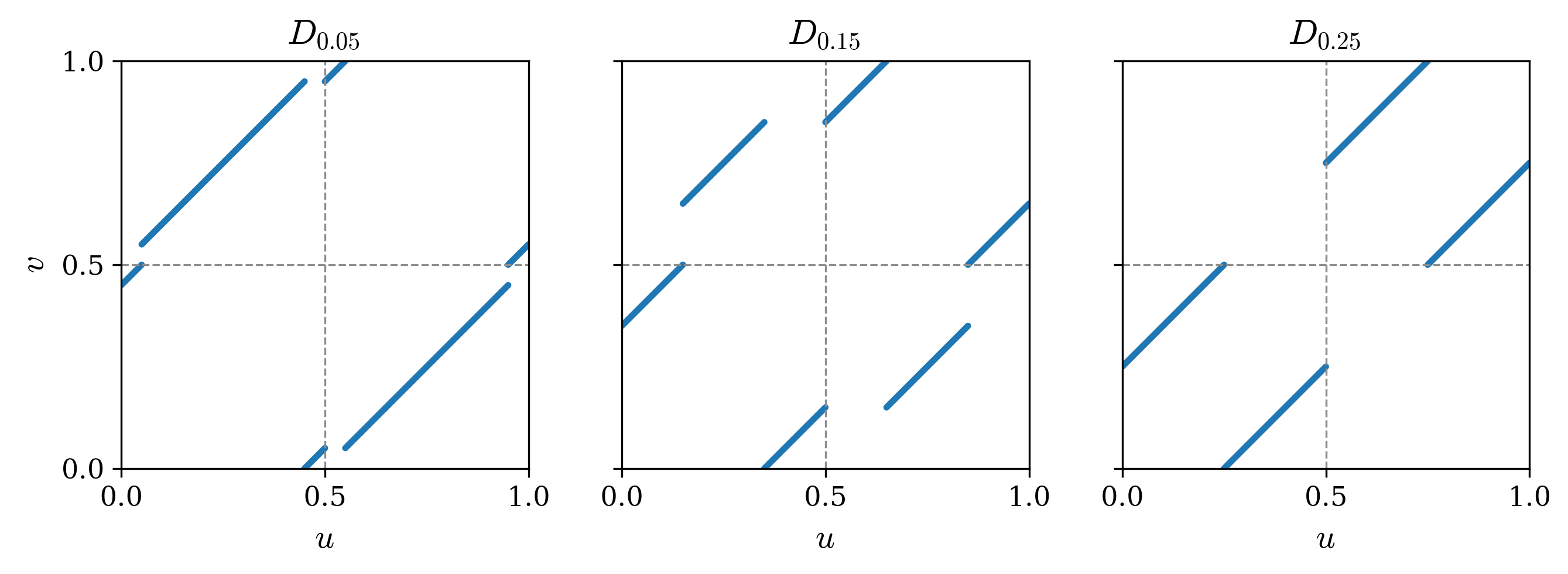}
\caption{Supports of the upper seeds $D_q$ of Proposition~\ref{prop:upper-seed} for $q=0.05$, $0.15$ and $0.25$; dashed lines mark $u=\frac12$ and $v=\frac12$.
The copulas spread their mass uniformly on diagonal segments.
As $q$ increases, the four strips of width $q$ grow while the two strips of width $\frac12-2q$ shrink, degenerating at $q=\frac14$.
The family traces the upper edge of $\Om$ above the same boundary $\phi=L(\beta)$, with $(\tau,\phi,\beta)=(8q^2,12q^2-\frac12,8q-1)$.}
\label{fig:D-seeds}
\end{figure}
\begin{remark}[The four boundary edges]\label{rem:boundary-edges}
The seed families and their ordinal sums give explicit copulas for all four curved edges visible in Figure~\ref{fig:region}.
Indeed, let $\alpha\in[0,1]$ and $a=(1-\alpha)/2$.
Applying \eqref{eq:ordinal-sum} first to $W=A_0$ and then to $D_0$ gives
\begin{align*}
(\tau,\phi,\beta)(B_{a,W})
&=\bigl(1-2\alpha^2,\,1-\tfrac32\alpha^2,\,1-2\alpha\bigr),\\
(\tau,\phi,\beta)(B_{a,D_0})
&=\bigl(1-\alpha^2,\,1-\tfrac32\alpha^2,\,1-2\alpha\bigr).
\end{align*}
As $\alpha$ decreases from $1$ to $0$, the first family traces the edge from $W$ to $M$ and the second the edge from $D_0$ to $M$. 
Writing $b=1-2\alpha$, both lie above the upper boundary $p=U(b)$ of the $(\phi,\beta)$-region; the first satisfies $t=\ell(p)$ and the second $t=u(p)$.
Together with $A_r$, which traces the edge $W$--$A_{1/2}$, and $D_q$, which traces $D_0$--$D_{1/4}$ above $p=L(b)$, these families account for all four edges.
\end{remark}

\section{Proof of the main result}\label{sec:proof}

\begin{proof}[Proof of Theorem~\ref{thm:main}]
\emph{Necessity.}
If $(t,p,b)=(\tau(C),\phi(C),\beta(C))$ for some $C\in\C$, then $b\in[-1,1]$, and the constraints $L(b)\le p\le U(b)$ and $\ell(p)\le t\le u(p)$ hold by Proposition~\ref{prop:known}.

\emph{Sufficiency.}
Fix $b\in[-1,1]$, $p\in[L(b),U(b)]$ and $t\in[\ell(p),u(p)]$, and put $k\coloneqq 1-b\in[0,2]$.
We first construct copulas $C^-$ and $C^+$ with
\begin{equation}\label{eq:Cpm-goal}
\tau(C^-)=\ell(p),\qquad \tau(C^+)=u(p),\qquad \phi(C^\pm)=p,\qquad \beta(C^\pm)=b.
\end{equation}
Consider the function
\[
F_b(\alpha)\coloneqq 1-\tfrac34\alpha^2-\tfrac34 k\alpha+\tfrac{3}{16}k^2,
\qquad \alpha\in\bigl[\tfrac k2,1\bigr].
\]
Comparing with \eqref{eq:boundary-functions} gives
\[
F_b(\frac k2)=1-\frac38k^2=U(b),
\qquad F_b(1)=\frac14-\frac34k+\frac{3}{16}k^2=L(b).
\]
Moreover, $F_b'(\alpha)=-\frac32\alpha-\frac34 k<0$ except at the single endpoint $(k,\alpha)=(0,0)$.
Thus $F_b$ maps $[\frac k2,1]$ continuously onto $[L(b),U(b)]$, and there exists $\alpha\in[\frac k2,1]$ with $F_b(\alpha)=p$.
If $\alpha=0$, then $b=p=t=1$, so $M$ attains $(t,p,b)$ and the claim follows; hence assume $\alpha>0$ from now on.

Set $x\coloneqq 1-\frac k\alpha$; then $x\in[-1,1]$ because $\frac k2\le\alpha$, and $\alpha x+1-\alpha=1-k=b$.
Moreover, expanding $L(x)=\frac{3}{16}(2-\frac k\alpha)^2-\frac12$ yields
\begin{equation}\label{eq:scaling-identity}
\alpha^2L(x)+1-\alpha^2
=1-\tfrac34\alpha^2-\tfrac34 k\alpha+\tfrac{3}{16}k^2
=F_b(\alpha)=p.
\end{equation}
Let $P=A_{(x+1)/4}$ and $Q=D_{(x+1)/8}$ be the seed copulas of Propositions~\ref{prop:lower-seed} and~\ref{prop:upper-seed}, so that
\[
\bigl(\tau(P),\phi(P),\beta(P)\bigr)=\bigl(\ell(L(x)),L(x),x\bigr),
\qquad
\bigl(\tau(Q),\phi(Q),\beta(Q)\bigr)=\bigl(u(L(x)),L(x),x\bigr),
\]
and define $C^-\coloneqq B_{a,P}$ and $C^+\coloneqq B_{a,Q}$ with $a\coloneqq \frac{1-\alpha}{2}$.
By \eqref{eq:ordinal-sum} and \eqref{eq:scaling-identity}, $\beta(C^\pm)=\alpha x+1-\alpha=b$ and $\phi(C^\pm)=\alpha^2L(x)+1-\alpha^2=p$.
Since $\ell$ and $u$ are affine with $\ell(1)=u(1)=1$, they commute with the map $z\mapsto\alpha^2z+1-\alpha^2$.
Thus, by \eqref{eq:ordinal-sum},
\[
\tau(C^-)=\alpha^2\ell(L(x))+1-\alpha^2=\ell\bigl(\alpha^2L(x)+1-\alpha^2\bigr)=\ell(p),
\]
and likewise $\tau(C^+)=u(p)$; this proves \eqref{eq:Cpm-goal}.

Let $C_s\coloneqq (1-s)C^-+sC^+$ for $s\in[0,1]$, a copula by convexity of $\C$.
The functionals $\phi$ and $\beta$ are affine in the copula by \eqref{eq:phi-def} and \eqref{eq:beta-def}, so $\phi(C_s)=p$ and $\beta(C_s)=b$ for all $s$.
Since $\Q$ is symmetric and affine in each argument,
\[
\tau(C_s)=\Q(C_s,C_s)=(1-s)^2\,\tau(C^-)+2s(1-s)\,\Q(C^-,C^+)+s^2\,\tau(C^+)
\]
is a polynomial in $s$ with $\tau(C_0)=\ell(p)$ and $\tau(C_1)=u(p)$, and the intermediate value theorem yields $s\in[0,1]$ with $\tau(C_s)=t$.
Hence every point of the right-hand side of \eqref{eq:region} is attained, which completes the proof.
\end{proof}

\begin{corollary}[Convexity and fixed-footrule sections]\label{cor:fixed-phi}
The region $\Om$ is compact and convex.
For $p\in[-\frac12,1]$, define
\[
b_-(p)\coloneqq 1-\sqrt{\tfrac83(1-p)},\qquad
b_+(p)\coloneqq \min\Bigl\{1,-1+\tfrac4{\sqrt3}\sqrt{p+\tfrac12}\Bigr\}.
\]
Then the section of $\Om$ at fixed footrule $p$ is the possibly degenerate rectangle
\begin{equation}\label{eq:fixed-phi-section}
\bigl\{(t,b)\in\R^2:(t,p,b)\in\Om\bigr\}
=[\ell(p),u(p)]\times[b_-(p),b_+(p)].
\end{equation}
Thus, once the value of Spearman's footrule is fixed, the admissible values of Kendall's tau and Blomqvist's beta vary independently over their respective intervals.
\end{corollary}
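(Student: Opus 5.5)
The plan is to derive everything from the description \eqref{eq:region} in Theorem~\ref{thm:main}. First, compactness: the right-hand side of \eqref{eq:region} is cut out by finitely many non-strict inequalities between continuous functions of $(t,p,b)$, so it is closed. It is also bounded, since $b\in[-1,1]$, $p\in[L(-1),U(1)]=[-\frac12,1]$, and then $t\in[\ell(p),u(p)]\subseteq[-1,1]$.

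For convexity, I rewrite the constraints as sublevel sets of convex functions.
\begin{itemize}
\item $L$ is a convex quadratic, so $L(b)-p\le 0$ defines a convex set.
\item $U$ is concave, since $-\frac38(1-b)^2$ is concave, so $p-U(b)\le0$ is convex.
\item $\ell$ and $u$ are affine, so $\ell(p)-t\le0$ and $t-u(p)\le0$ are half-spaces.
\item $-1\le b\le1$ is a slab.
\end{itemize}
An intersection of convex sets is convex, hence $\Om$ is convex.

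For the sections, fix $p\in[-\frac12,1]$. By \eqref{eq:region}, $(t,p,b)\in\Om$ if and only if two independent conditions hold: $t\in[\ell(p),u(p)]$, and $b\in[-1,1]$ with $L(b)\le p\le U(b)$. The section is therefore a product, and it remains to identify the $b$-set as $[b_-(p),b_+(p)]$. I solve each inequality on $[-1,1]$.
\begin{itemize}
\item The condition $p\le U(b)$ is equivalent to $(1-b)^2\le\frac83(1-p)$. On $b\le1$ this means $1-b\le\sqrt{\tfrac83(1-p)}$, i.e.\ $b\ge b_-(p)$.
\item The condition $L(b)\le p$ is equivalent to $(1+b)^2\le\frac{16}{3}(p+\frac12)$. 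On $b\ge-1$ this means $b\le -1+\frac4{\sqrt3}\sqrt{p+\frac12}$.
\item Intersecting with $b\le1$ produces the minimum in $b_+(p)$.
\end{itemize}

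I also need the lower cap $b\ge-1$ to be automatic and the interval to be nonempty. Since $p\ge-\frac12$, we have $\frac83(1-p)\le4$, hence $b_-(p)\ge-1$. For nonemptiness, $b_-(p)\le1$ is clear and $-1+\frac4{\sqrt3}\sqrt{p+\frac12}\ge-1$; what remains is to show $b_-(p)\le b_+(p)$. This is the only step needing care. I would argue it via the $(\phi,\beta)$-region: for each $p\in[-\frac12,1]$ some $b$ with $L(b)\le p\le U(b)$ exists by Proposition~\ref{prop:known}, since every footrule value in $[-\frac12,1]$ is attained. Alternatively, I can check the inequality directly with the explicit witness $b=b_-(p)$, verifying $L(b_-(p))\le U(b_-(p))=p$. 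Indeed $L\le U$ on $[-1,1]$, because $U(b)-L(b)=\frac32-\frac{3}{16}(1+b)^2-\frac38(1-b)^2$ is nonnegative there, with equality only at $b=-1$ (where $U(-1)=L(-1)=-\frac12$). The rectangle degenerates at $p=-\frac12$, where $b_-=b_+=-1$, and at $p=1$, where the $t$-interval and the $b$-interval both collapse.
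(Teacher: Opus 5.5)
Your proof is correct and follows the paper's argument. You read off compactness and convexity from \eqref{eq:region} via convexity of $L$, concavity of $U$ and affinity of $\ell,u$, and you obtain the section by solving $L(b)\le p\le U(b)$ using $1\pm b\ge 0$. Your additional checks that $b_-(p)\ge-1$ and $b_-(p)\le b_+(p)$ (via $U(b_-(p))=p$ and $L\le U$ on $[-1,1]$) make explicit what the paper leaves implicit in the phrase ``possibly degenerate rectangle''.
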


\begin{proof}
By Theorem~\ref{thm:main}, the region $\Om$ is closed and bounded, hence compact.
The function $L$ is convex, the function $U$ is concave, and $\ell$ and $u$ are affine.
Hence the bounds on $b$, the epigraph of $L$, the hypograph of $U$ and the two affine bounds on $t$ are convex sets, so their intersection $\Om$ is convex.

Now fix $p\in[-\frac12,1]$.
Since $1+b\ge0$ and $1-b\ge0$ for $b\in[-1,1]$, solving $L(b)\le p\le U(b)$ for $b$ gives
\[
1-\sqrt{\tfrac83(1-p)}\le b
\le \min\Bigl\{1,-1+\tfrac4{\sqrt3}\sqrt{p+\tfrac12}\Bigr\}.
\]
The bounds $\ell(p)\le t\le u(p)$ do not involve $b$, which proves \eqref{eq:fixed-phi-section}.
\end{proof}

\begin{corollary}[Exact $(\tau,\beta)$-region]\label{cor:beta-tau}
We have
\[
\bigl\{(\tau(C),\beta(C)):C\in\C\bigr\}
=\Bigl\{(t,b)\in[-1,1]^2:\ \tfrac14(1+b)^2-1\le t\le 1-\tfrac14(1-b)^2\Bigr\},
\]
recovering the region stated in \cite[Exer.~5.17]{nelsen2006introduction} and obtained in \cite[Thm.~5 and Cor.~6]{kokolbukovsek2019relation}.
\end{corollary}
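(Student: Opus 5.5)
The $(\tau,\beta)$-region is the projection of $\Om$ onto the $(t,b)$-coordinates, so I will compute it directly from Theorem~\ref{thm:main}. A pair $(t,b)$ is attainable if and only if some $p$ satisfies $L(b)\le p\le U(b)$ and $\ell(p)\le t\le u(p)$, with $b\in[-1,1]$.

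For fixed $b$, the set of admissible $t$ is the union of the intervals $[\ell(p),u(p)]$ over $p\in[L(b),U(b)]$. Both $\ell$ and $u$ are increasing and continuous, and $\ell(p)\le u(p)$ because $u(p)-\ell(p)=\frac23(1-p)\ge0$. The intervals therefore vary continuously and overlap, so their union is the interval $[\ell(L(b)),u(U(b))]$.

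To make the overlap claim rigorous, take any $t$ in $[\ell(L(b)),u(U(b))]$. The function $g(p)=\ell(p)$ is continuous, so as $p$ runs over $[L(b),U(b)]$ it covers $[\ell(L(b)),\ell(U(b))]$. Any $t$ in that range is reached with some $p$ satisfying $t=\ell(p)\le u(p)$. If instead $t>\ell(U(b))$, then $p=U(b)$ works, since $\ell(U(b))\le t\le u(U(b))$.

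The endpoints are now a direct calculation:
\[
\ell(L(b))=\tfrac43\Bigl(\tfrac3{16}(1+b)^2-\tfrac12\Bigr)-\tfrac13=\tfrac14(1+b)^2-1,
\]
\[
u(U(b))=\tfrac23\Bigl(1-\tfrac38(1-b)^2\Bigr)+\tfrac13=1-\tfrac14(1-b)^2.
\]
For $b\in[-1,1]$ both bounds lie in $[-1,1]$, so the condition $t\in[-1,1]$ is automatic. This gives the stated region, and attainment of each point follows from Theorem~\ref{thm:main}.

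The argument has no real obstacle. The only care needed is to justify that the union of the intervals is connected, which the monotonicity and continuity of $\ell$ and $u$ handle as above.
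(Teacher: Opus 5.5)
Your proof is correct and takes essentially the same route as the paper: project $\Om$ onto the $(t,b)$-plane, show that for fixed $b$ the union of the fibres $[\ell(p),u(p)]$ over $p\in[L(b),U(b)]$ is the interval $[\ell(L(b)),u(U(b))]$ using monotonicity, and evaluate the two endpoints. The only difference is cosmetic: the paper obtains connectedness of the union from convexity of the set $\{(p,t): L(b)\le p\le U(b),\ \ell(p)\le t\le u(p)\}$, whereas you use an explicit covering argument (the intermediate value theorem for $\ell$, plus the interval at $p=U(b)$).
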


\begin{proof}
By Theorem~\ref{thm:main}, the projection of $\Om$ onto the $(t,b)$-coordinates has, over each fixed $b\in[-1,1]$, the fibre
\(
\bigcup_{p\in[L(b),U(b)]}[\ell(p),u(p)]
\),
which is the projection onto the $t$-axis of the set
\[
S_b\coloneqq
\bigl\{(p,t)\in\R^2:\ L(b)\le p\le U(b),\ \ell(p)\le t\le u(p)\bigr\}.
\]
Since $\ell$ and $u$ are affine and $L(b)\le U(b)$, the set $S_b$ is convex, so its projection onto the $t$-axis is an interval.
Because $\ell$ and $u$ are increasing, the left endpoint is $\ell(L(b))$ and the right endpoint is $u(U(b))$.
Evaluating with \eqref{eq:boundary-functions} gives
\[
\ell(L(b))=\frac14(1+b)^2-1,\qquad
u(U(b))=1-\frac14(1-b)^2,
\]
which proves the claim.
\end{proof}

\begin{remark}\label{rem:fibre}
The spread $u(p)-\ell(p)=\frac23(1-p)$ of admissible values of $\tau$ over an admissible pair $(\phi,\beta)=(p,b)$ depends on $p$ alone, so fixing $\beta$ narrows the range of $\tau$ no further than the $(\tau,\phi)$-bounds \eqref{eq:ft-region} already do.
More precisely,
\[
\ell(p)=p-\frac{1-p}{3},\qquad
u(p)=p+\frac{1-p}{3},
\]
so every such fibre is symmetric about $t=p$.
In particular, for every admissible pair $(p,b)$ there exists a copula $C$ satisfying
\[
\tau(C)=\phi(C)=p,\qquad \beta(C)=b.
\]
Equivalently, $\Om$ is invariant under the affine involution $(t,p,b)\mapsto(2p-t,p,b)$.
This is a geometric symmetry of the attainable region; the argument does not identify a canonical operation on copulas that induces the involution.
In this attainable-region sense, Blomqvist's beta adds no sharp restriction on Kendall's tau once Spearman's footrule is known.
This complements the similarity-measure computations in \cite[Sec.~5]{kokolbukovsek2023exact}, by which knowing $\beta$ alone already conveys, on average, very little information about the possible values of $\tau$.
\end{remark}

\section{Concluding remarks}\label{sec:conclusion}

For fixed $b\in[-1,1]$, the area of the section of $\Om$ in the $(t,p)$-coordinates is
\[
A(b)
\coloneqq \int_{L(b)}^{U(b)}\bigl(u(p)-\ell(p)\bigr)\de p
=\frac23\int_{L(b)}^{U(b)}(1-p)\de p
  =\frac{3}{256}(3-b)^2(1+b)(5-3b).
\]
Differentiating gives
\[
A'(b)=\frac{3}{64}(3-b)\bigl(3(b-1)^2-4\bigr).
\]
At $b=-1$ the section degenerates to a line segment, while the sign of $A'$ shows that $A$ has the unique maximum $\frac14+\frac{\sqrt3}{6}$ at $b=1-\frac{2}{\sqrt3}$.
Integrating these sectional areas gives the volume of the region,
\[
\operatorname{vol}(\Om)
=\int_{-1}^{1}A(b)\de b
=\frac{3}{256}\int_{-1}^{1}
\bigl(-3b^4+20b^3-34b^2-12b+45\bigr)\de b
=\frac{31}{40},
\]
which is $\frac{31}{240}\approx0.129$ of the volume of the bounding box $[-1,1]\times[-\frac12,1]\times[-1,1]$.
For comparison, $\operatorname{vol}(\Omega_{\phi,\gamma,\tau})=\frac{3}{16}$ and $\operatorname{vol}(\Omega_{\beta,\phi,\gamma})=\frac{19}{40}$, see \cite{kokolbukovsek2026footrule,kokolbukovsek2026exact}.
This relatively large volume reflects Remark~\ref{rem:fibre}: the three coefficients $\tau$, $\phi$ and $\beta$ constrain one another only through the two bivariate regions.

In view of \cite{kokolbukovsek2026exact,kokolbukovsek2026footrule} and Theorem~\ref{thm:main}, the next natural problems are the exact region $\Omega_{\beta,\gamma,\tau}$ and the full four-dimensional region determined by $\beta$, $\phi$, $\gamma$ and $\tau$.
To the best of our knowledge, both remain open.
Exact regions involving Spearman's rho also appear to be considerably harder; cf.~\cite{schreyer2017exact,kokolbukovsek2024rhofootrule,tschimpke2025revisiting}.

\bibliographystyle{plainnat}
\bibliography{references}

\end{document}